\newtheorem{thm}{Theorem}[section]
\newtheorem{cor}[thm]{Corollary}
\newtheorem{lem}[thm]{Lemma}
\newtheorem{pro}[thm]{Proposition}
\theoremstyle{definition}\newtheorem{defn}[thm]{Definition}
\theoremstyle{remark}
\newtheorem{rem}[thm]{Remark}
\numberwithin{equation}{section}
\begin{document}
	
\title
[]
{Ascent and descent of WCT operators on Orlicz spaces}

\author{\sc\bf S. Nojoumi and S. Shamsigamchi }
\address{\sc }

\email{s.shamsi@pnu.ac.ir}

\address{ Department of Mathematics, Payame Noor University (PNU), P. O. Box: 19395-3697, Tehran- Iran}

\subjclass[2010]{46E30, 47A05}

\keywords{weighted conditional expectation, ascent, descent, Orlicz space.}


\begin{abstract}
In this paper we are concerned with weighted conditional type(WCT) operators on Orlicz spaces. We prove that all WCT operators have finite ascent. Also, we provide some sufficient conditions for WCT operators to have finite descent. As a consequence we find some decompositions for Orlicz space $L^{\Phi}(\mu)$. In the sequel we discuss power bounded WCT operators and some results on their Cesaro boundedness.

\end{abstract}

\maketitle

\section{ \sc\bf Introduction and Preliminaries }
Let $X$ be a linear space and $T:X\longrightarrow X$  be a linear operator with domain $\mathcal{D}(T)$ and range $\mathcal{R}(T)$ in $X$.  The null space of the iterates of $T$, $T^n$, is denoted by   $\mathcal{N}(T^n)$, and we know that the null spaces of  $T^n$'s form an increasing chain of subspaces
$\{0\}=\mathcal{N}(T^0)\subset \mathcal{N}(T)\subset \mathcal{N}(T^2)\subset\ldots$. Also  the ranges  of iterates of $T$ form a nested chain of subspaces $X=\mathcal{R}(T^0)\supset \mathcal{R}(T)\supset \mathcal{R}(T^2)\supset\ldots$.
  Note that if  $\mathcal{N}(T^k)$ coincides with $\mathcal{N}(T^{k+1})$ for some $k$,  it coincides with all $\mathcal{N}(T^n)$ for $n>k$.
The smallest non-negative integer $k$ such that $\mathcal{N}(T^k)=\mathcal{N}(T^{k+1})$ is called the \textit{ascent} of $T$ and denotes by $\alpha(T)$. If there is no such $k$, then  we set $\alpha(T)=\infty$. Also  if $\mathcal{R}(T^k)=\mathcal{R}(T^{k+1})$, for some  non-negative integer $k$, then  $\mathcal{R}(T^n)=\mathcal{R}(T^k)$ for all $n>k$.
 The smallest non-negative integer $k$ such that $\mathcal{R}(T^k)=\mathcal{R}(T^{k+1})$ is called  \textit{descent} of $T$
and denotes by $\delta(T)$. We set $\delta(T)=\infty$ when there is no such $k$. When ascent and descent of an operator are finite, then they are equal and the linear space $X$ can be decomposed into the direct sum of the null and range spaces of a suitable  iterates of $T$. The ascent and descent of an operator can be used to characterize when an operator can be broken into a nilpotent piece and an invertible one; see, for example, \cite{abr, Tay}. For some results on ascent and descent of bounded operators in general setting see, for example, \cite{tay,yoo}.\\
The operator $T$ is called power bounded if the norms of $T^k$, $k\geq0$, are uniformly bounded $(\sup_{k}\|T^k\|<\infty)$, and Cesaro bounded if the Cesaro means $A_n(T)=n^{-1}\sum^{n-1}_{i=0}T^{i}$ are uniformly bounded.\\
Here we recall the concepts on Orlicz spaces. A function $\Phi:\mathbb{R}\rightarrow [0,\infty]$ is called a \textit{Young function}  if $\Phi$ is   convex, even,  and  $\Phi(0)=0$; we will also assume that $\Phi$ is neither identically zero nor identically infinite on $(0,\infty)$. The fact that $\Phi(0)=0$, along with the convexity of $\Phi$, implies that $\lim_{x\rightarrow 0^+}\Phi(x)=0$; while  $\Phi\neq 0$, again along with the convexity of $\Phi$, implies that   $\lim_{x\rightarrow\infty}\Phi(x)=\infty$. We set
$a_{\Phi}:=\sup\{x\geq0:\Phi(x)=0\}$
 and
$
b_{\Phi}:=\sup\{x>0:\Phi(x)<\infty\}.
$
Then it can be checked that  $\Phi$ is continuous and nondecreasing on
$[0,b_{\Phi})$ and strictly increasing on $[a_{\Phi},b_{\Phi})$. We also assume the left-continuity of the function $\Phi$ at $b_\Phi$, i.e. $\lim_{x\rightarrow b_\Phi^-} \Phi(x)=\Phi(b_\Phi)$.

 To each Young  function $\Phi$ is  associated another
 convex function $\Psi:\mathbb{R}\rightarrow[0,\infty)$ with similar properties,  defined by
$$
\Psi(y)=\sup\{x|y|-\Phi(x):x\geq0\} \quad (y\in\mathbb{R}).
$$
The function $\Psi$ is called the  \textit{function  complementary} to $\Phi$ in the sense of Young. Also, for any measurable function $f$ on the measure space $(X,\Sigma, \mu)$ we set $I_{\Phi}(f)=\int_{X}\Phi(f)d\mu$.
 Any pair of complementary functions $(\Phi,\Psi)$ satisfies Young's inequality $xy\leq \Phi(x)+\Psi(y)\,\, (x,y\geq 0)$.

The generalized inverse of the Young function $\Phi$ is defined by
$$
\Phi^{-1}(y)=\inf \{ x\geq 0: \Phi(x)> y\} \quad (y\in [0,\infty)).
$$
Notice that if $x\geq0$, then $\Phi\big(\Phi^{-1}(x)\big)\leq x$,
and if $\Phi(x)<\infty$, we also have $x\leq\Phi^{-1}\big(\Phi(x)\big)$. There are equalities in either case when $\Phi$ is a Young function vanishing only at zero and taking only finite values.
Also, if $(\Phi,\Psi)$ is a pair of complementary
Young functions, then
\begin{equation}\label{12}
x<\Phi^{-1}(x)\Psi^{-1}(x)\leq 2x
\end{equation}
for all $x\geq0$ (Proposition 2.1.1(ii) \cite{raor}).

By an $N$-\textit{function} we mean a  Young function vanishing only at zero, taking only finite values, and such that $\lim_{x\rightarrow\infty}\Phi(x)/x=\infty$ and $\lim_{x\rightarrow 0^+}\Phi(x)/x=0$. Note that then $a_\Phi=0,$ $b_\Phi=\infty$, and, as we said above,   $\Phi$ is continuous and strictly increasing on $[0,\infty)$. Moreover, a function complementary to an $N$-function is again an $N$-function.

A Young function $\Phi$ is said to satisfy the
$\Delta_{2}$-condition at $\infty$ if $\Phi(2x)\leq
K\Phi(x) \; ( x\geq x_{0})$  for some constants
$K>0$ and $x_0>0$. A Young function $\Phi$ satisfies the
$\Delta_{2}$-condition globally if $\Phi(2x)\leq
K\Phi(x) \; ( x\geq 0)$  for some
$K>0$.

A Young function $\Phi$ is said to satisfy the
$\Delta'$-condition (respectively, the $\nabla'$-condition) at $\infty$, if there exist  $ c>0$
(respectively, $b>0$) and $x_0>0$ such that
$$
\Phi(xy)\leq c\,\Phi(x)\Phi(y) \quad (x,y\geq x_{0})
$$
$$
(\mbox{respectively, }  \Phi(bxy)\geq \Phi(x)\Phi(y) \quad ( x,y\geq x_{0})).
$$
If $x_{0}=0$, these conditions are said to hold
globally. Notice that if $\Phi\in \Delta'$, then  $\Phi\in
\Delta_{2}$ (both at $\infty$ and globally).

 Let $\Phi, \Psi$ be Young
functions. Then $\Phi$ is called stronger than $\Psi$ at $\infty$, which is denoted by $\Phi\mathrel{\overset{\makebox[0pt]{\mbox{\normalfont\scriptsize\sffamily $\ell$ }}}{\succ}}\Psi$ [or $\Psi\mathrel{\overset{\makebox[0pt]{\mbox{\normalfont\scriptsize\sffamily $\ell$}}}{\prec}}\Phi$], if
$$
\Psi(x)\leq\Phi(ax)\quad (x\geq x_0)
$$
for some $a\geq0$ and $x_0>0$; if $x_0=0$, this condition is
said to hold globally and is then denoted by  $\Phi\mathrel{\overset{\makebox[0pt]{\mbox{\normalfont\scriptsize\sffamily $a$}}}{\succ}}\Psi$ [or $\Psi\mathrel{\overset{\makebox[0pt]{\mbox{\normalfont\scriptsize\sffamily $a$}}}{\prec}}\Phi$].

%

For a given complete $\sigma$-finite measure space $(X, \Sigma, \mu)$, let $L^0(\Sigma)$  be the linear space of  equivalence classes of $\Sigma$-measurable real-valued functions on $X$, that is, we identify functions equal $\mu$-almost everywhere on $X$. The support $S(f)$ of a
measurable function $f$ is defined by $S(f):=\{x\in X : f(x)\neq
0\}$. For a given Young function  $\Phi$, the space
$$
L^{\Phi}(\mu)=\left\{f\in L^0(\Sigma):\exists k>0,
\int_X\Phi(kf)d\mu<\infty\right\}
$$
is called an Orlicz space. Define the functional

$$N_{\Phi}(f)=\inf \{k>0:\int_{X}\Phi(\frac{f}{k})d\mu\leq 1\}.$$
 $(L^{\Phi}(\mu), N_{\Phi}(.))$  is a normed linear space. If a.e. equal functions are identified, then $(L^{\Phi}(\mu), N_{\Phi}(.))$ is a Banach space, the basic measure space $(X,\Sigma,\mu)$ is unrestricted.

$$
\|f\|_{\Phi}=\inf\left\{k>0:\int_X\Phi(f/k)d\mu\leq1\right\}.
$$
 The couple $(L^{\Phi}(\Sigma), \|\cdot\|_{\Phi})$ is called the Orlicz space generated by a Young function $\Phi$.
 Let $\Phi(x)=|x|^p/p$ with $1<p<\infty$; $\Phi$ is then a Young function and $\Psi(x)=|x|^{p'}/p'$, with $1/p+1/p'=1$, is the  Young function complementary to $\Phi$.
 Thus, with this function $\Phi$ we retrieve the classical Lebesgue space $L^p(\Sigma)$, i.e. $L^\Phi(\Sigma)=L^p(\Sigma)$.

Recall that an atom of the measure space $(X,\Sigma,\mu)$ is a set $A\in\Sigma$ with $\mu(A)>0$ such that if
$F\in\Sigma$ and $F\subset A$, then either $\mu(F)=0$ or
$\mu(F)=\mu(A)$. A measure space $(X,\Sigma,\mu)$ with no atoms is
called a non-atomic measure space. It is well-known  that if $(X, \Sigma, \mu)$ is a $\sigma$-finite measure space, then for every measurable real-valued function $f$ on $X$ and every atom $A$, there is a unique scalar, denoted  by $f(A)$,  such that $f=f(A)\,\, \mu$-a.e. on $A$. Also, if $(X, \Sigma, \mu)$ is a $\sigma$-finite measure space that fails to be non-atomic, there is a non-empty countable set  of pairwise disjoint atoms $\{A_n\}_{n\in\mathbb{N}}$  with the property that ${B}:=X\setminus\bigcup_{n\in\mathbb{N}}A_n$ contains no atoms \cite{z}.\\
For a sub-$\sigma$-finite algebra $\mathcal{A}\subseteq\Sigma$, the
conditional expectation operator associated with $\mathcal{A}$ is
the mapping $f\rightarrow E^{\mathcal{A}}f$, defined for all
non-negative, measurable function $f$ as well as for all $f\in
L^1(\Sigma)$ and $f\in L^{\infty}(\Sigma)$, where
$E^{\mathcal{A}}f$, by the Radon-Nikodym theorem, is the unique
$\mathcal{A}$-measurable function satisfying
$$\int_{A}fd\mu=\int_{A}E^{\mathcal{A}}fd\mu, \ \ \ \forall A\in \mathcal{A} .$$
As an operator on $L^{1}({\Sigma})$ and $L^{\infty}(\Sigma)$,
$E^{\mathcal{A}}$ is idempotent and
$E^{\mathcal{A}}(L^{\infty}(\Sigma))=L^{\infty}(\mathcal{A})$ and
$E^{\mathcal{A}}(L^1(\Sigma))=L^1(\mathcal{A})$. Thus it can be
defined on all interpolation spaces of $L^1$ and $L^{\infty}$ such
as, Orlicz spaces \cite{besh}. If there is no possibility of
confusion, we write $E(f)$ in place of $E^{\mathcal{A}}(f)$. This
operator will play a major role in our work and we list here some
of its useful properties:

\vspace*{0.2cm} \noindent $\bullet$ \  If $g$ is
$\mathcal{A}$-measurable, then $E(fg)=E(f)g$.

\noindent $\bullet$ \ $\varphi(E(f))\leq E(\varphi(f))$, where
$\varphi$ is a convex function.

\noindent $\bullet$ \ If $f\geq 0$, then $E(f)\geq 0$; if $f>0$,
then $E(f)>0$.

\noindent $\bullet$ \ For each $f\geq 0$, $S(f)\subseteq S(E(f))$,
where  $S(f)=\{x\in X; f(x)\neq 0\}$.\\
A detailed discussion and verification of
most of these properties may be found in \cite{rao}.

Let $f\in L^{\Phi}(\Sigma)$.  It is not difficult to see that $\Phi(E(f))\leq
E(\Phi(f))$ and so by some elementary computations we get that $\|E(f)\|\leq \|f\|$ i.e, $E$ is a
contraction on the Orlicz spaces.
As we defined in \cite{ye}, we say that the pair $(E, \Phi)$ satisfies the generalized conditional-type
H\"{o}lder-inequality (or briefly GCH-inequality) if there exists some positive constant $C$
such that for all $f\in L^{\Phi}(\mu)$ and $g\in
L^{\Psi}(\mu)$ we have
$$E(|fg|)\leq C \Phi^{-1}(E(\Phi(|f|)))\Psi^{-1}(E(\Psi(|g|))),$$
where $\Psi$ is the complementary Young function of $\Phi$. There are many examples of pairs $(E, \Phi)$ that satisfy GCH-inequality in \cite{ye}.

Finally in the following we give another key lemma that is important in our investigation. The proof is an easy exercise.
\begin{lem}\label{l1n} If $\Phi$ is a Young function and $f$ is a $\Sigma$-measurable function such that $E(f)$ and $E(\Phi(f))$ are defined, then $S(E(f))=S(E(\Phi(f)))$.
\end{lem}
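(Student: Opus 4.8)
The plan is to establish the two support inclusions separately. Since $E(f)$ and $E(\Phi(f))$ are both $\A$-measurable, their zero sets $\{E(f)=0\}$ and $\{E(\Phi(f))=0\}$ lie in $\A$, and proving $S(E(f))=S(E(\Phi(f)))$ amounts to showing these two zero sets coincide up to a $\mu$-null set. The single tool I would use throughout is the defining averaging identity $\int_A E(g)\,d\mu=\int_A g\,d\mu$ for $A\in\A$, in the sharpened form: if $g\geq0$ and $E(g)=0$ on some $A\in\A$, then $\int_A g\,d\mu=0$ and hence $g=0$ $\mu$-a.e. on $A$. This is the bridge that turns the vanishing of a conditional expectation into the vanishing of the function itself.

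For $S(E(f))\subseteq S(E(\Phi(f)))$ I would put $A:=\{E(\Phi(f))=0\}\in\A$. Because $\Phi(f)\geq0$, the sharpened identity gives $\Phi(f)=0$ $\mu$-a.e. on $A$, and since $\Phi$ vanishes only at the origin this forces $f=0$ $\mu$-a.e. on $A$. Applying the property $E(fg)=E(f)g$ with $g=\chi_A$ then yields $\chi_A E(f)=E(f\chi_A)=0$, so $E(f)=0$ on $A$. Thus $\{E(\Phi(f))=0\}\subseteq\{E(f)=0\}$, which is precisely $S(E(f))\subseteq S(E(\Phi(f)))$.

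The reverse inclusion is where I expect the real care to be needed, and I regard it as the main obstacle. Starting from $A:=\{E(f)=0\}\in\A$, the averaging identity only controls $\int_B f\,d\mu$ for $B\subseteq A$ in $\A$, and this does not by itself force $f=0$ a.e. on $A$ once $f$ changes sign: cancellation can make $E(f)$ vanish while $E(\Phi(f))$ does not. The fix I would carry out is to pass to $|f|$; since $\Phi$ is even we have $\Phi(f)=\Phi(|f|)$ and hence $E(\Phi(f))=E(\Phi(|f|))$, so after replacing $f$ by the nonnegative function $|f|$ the relation $E(|f|)=0$ on $A$ does give $|f|=0$ a.e. on $A$, whence $\Phi(f)=0$ and $E(\Phi(f))=0$ on $A$. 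This delivers $\{E(f)=0\}\subseteq\{E(\Phi(f))=0\}$, i.e. $S(E(\Phi(f)))\subseteq S(E(f))$, and together with the first step completes the proof.

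In short, the argument genuinely rests on two points: the reduction to a nonnegative function (forced by possible sign cancellation in $E(f)$) and the fact that $\Phi$ vanishes only at zero (so that $\Phi(f)=0$ is equivalent to $f=0$). Once these are granted, every remaining step is an immediate application of the averaging identity together with $E(fg)=E(f)g$, which is why the statement reduces to an easy exercise.
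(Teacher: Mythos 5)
Your argument breaks down exactly where you predicted it would, and your fix does not work. For the inclusion $S(E(\Phi(f)))\subseteq S(E(f))$ you set $A=\{E(f)=0\}$ and then ``pass to $|f|$'', invoking ``the relation $E(|f|)=0$ on $A$''. That relation is not available: the hypothesis gives $E(f)=0$ on $A$, and this does \emph{not} imply $E(|f|)=0$ on $A$ --- the failure of that implication is precisely the sign cancellation you yourself described. Replacing $f$ by $|f|$ proves a different statement, namely $S(E(|f|))=S(E(\Phi(f)))$, not the stated one. No repair is possible at this step, because for sign-changing $f$ the lemma as printed is simply false: on a two-point probability space with trivial algebra $\mathcal{A}=\{\emptyset,X\}$, take $f=c\chi_{\{x_1\}}-c\chi_{\{x_2\}}$ with $c>a_\Phi$; then $E(f)\equiv 0$ while $E(\Phi(f))\equiv\Phi(c)>0$, so $S(E(\Phi(f)))=X\not\subseteq\emptyset=S(E(f))$. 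The lemma must carry the implicit hypothesis $f\geq 0$, which is consistent with how the paper applies it (always to moduli such as $|u|$, $|w|$). Under that hypothesis no passage to $|f|$ is needed at all: $E(f)=0$ on $A$ together with $f\geq 0$ gives $f=0$ a.e.\ on $A$ directly, hence $\Phi(f)=0$ a.e.\ on $A$ and $E(\Phi(f))\chi_A=E(\Phi(f)\chi_A)=0$.

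There is a second, independent gap: both of your inclusions use that ``$\Phi$ vanishes only at zero'', but this paper's Young functions need not satisfy that; the quantity $a_\Phi=\sup\{x\geq 0:\Phi(x)=0\}$ may be strictly positive (e.g.\ $\Phi(x)=\max(|x|-1,0)$). If $a_\Phi>0$ the conclusion fails even for $f\geq 0$: with trivial $\mathcal{A}$ on a probability space and $f\equiv a_\Phi/2$ one has $E(f)=a_\Phi/2\neq 0$ but $E(\Phi(f))=0$. So the statement also needs $a_\Phi=0$ (for instance $\Phi$ an $N$-function), and your write-up treats this as a known fact rather than flagging it as an added assumption. In short, the correct statement is: for $f\geq 0$ and $\Phi$ vanishing only at zero, $S(E(f))=S(E(\Phi(f)))$. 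Your first inclusion proves its half correctly under those hypotheses; your second inclusion rests on a substitution that silently replaces the given hypothesis $E(f)=0$ by the stronger, unavailable one $E(|f|)=0$.
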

We keep the above notations throughout the paper.\\
Weighted conditional type operators have been studied by many mathematicians in recent years \cite{e1,ye,y1,ej,gb}. For the importance of WCT operators we refer the interested readers to \cite{dhd, do,mo,lam}. In this paper we are going to investigate ascent and descent of weighted conditional type (WCT) operators on Orlicz spaces. To this end, first we show that all WCT operators have finite ascent. Also, we provide some sufficient conditions for them to have finite descent. As a consequence, we find some decompositions for Orlicz space $L^{\Phi}(\mu)$. Finally we find some results on power bounded WCT operators and Cesaro bounded WCT operators.\\

\section{ Main Results}
In this section we determine which hypothesis let us get that WCT operator $T=M_wEM_u$ is a bounded operator on the Orlicz space $L^{\Phi}(\mu)$. After that we will discuss the conditions under which the bounded operator $T$ has finite ascent and descent. Also, we will find some sufficient condition for $T$ to have closed range. Some other results will be obtained. Here we define  weighted conditional type operators on Orlicz spaces.

\begin{defn} Let $\Phi$ be a Young function and $u,w:X\rightarrow \mathbb{C}$ be a measurable function on the measure space $(X,\Sigma,\mu)$. The weighted conditional type operator (WCT operator) from $L^{\Phi}(\Sigma)$ into $L^{\Phi}(\Sigma)$ is defined by $wEM_u(f)=wE(uf)$ for every $f\in L^{\Phi}(\Sigma)$ such that $wE(uf)\in L^{\Phi}(\mathcal{A})$.
 \end{defn}
In the next theorem we give a condition under which the WCT operator $T=M_wEM_u$ is bounded on the Orlicz space $L^{\Phi}(\mu)$.

 \begin{thm}
 Let $(\Phi,\Psi)$ be a pair of complementary Young's functions such that satisfies GCH-inequality and $T=M_wEM_u$ be WCT operator. If $w\Psi^{-1}(E(\Psi(u)))\in L^{\infty}(\Sigma)$, then $T$ is a bounded operator on $L^{\Phi}(\mu)$.
\end{thm}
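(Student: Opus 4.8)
The plan is to bound the Luxemburg modular $\int_X \Phi(|Tf|/k)\,d\mu$ for a well-chosen scaling constant $k$ and then read off the operator-norm estimate from the definition of $\|\cdot\|_\Phi$. First I would write $Tf=wE(uf)$ and combine the pointwise inequality $|E(uf)|\leq E(|uf|)$ with the GCH-inequality applied to the pair $(f,u)$ (i.e.\ taking the ``$f$'' in that inequality to be our $f$ and the ``$g$'' to be $u$). This yields, almost everywhere,
$$|Tf|\leq C\,|w|\,\Phi^{-1}(E(\Phi(|f|)))\,\Psi^{-1}(E(\Psi(|u|)))=C\,\big(|w|\,\Psi^{-1}(E(\Psi(|u|)))\big)\,\Phi^{-1}(E(\Phi(|f|))).$$
The hypothesis $w\Psi^{-1}(E(\Psi(u)))\in L^{\infty}(\Sigma)$ then lets me replace the bracketed factor by its essential supremum $M_0:=\|w\Psi^{-1}(E(\Psi(u)))\|_{\infty}$, producing the clean pointwise bound $|Tf|\leq CM_0\,\Phi^{-1}(E(\Phi(|f|)))$.

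Next I would exploit the interaction between $\Phi$ and its generalized inverse. Taking $k=CM_0$ and using the property $\Phi(\Phi^{-1}(x))\leq x$ recorded in the preliminaries, together with the monotonicity of $\Phi$, I obtain almost everywhere
$$\Phi\!\left(\frac{|Tf|}{CM_0}\right)\leq \Phi\big(\Phi^{-1}(E(\Phi(|f|)))\big)\leq E(\Phi(|f|)).$$
Integrating over $X$ and invoking the averaging identity $\int_X E(g)\,d\mu=\int_X g\,d\mu$ (valid since $X\in\mathcal{A}$ and $g=\Phi(|f|)\geq 0$) collapses the conditional expectation, giving
$$\int_X \Phi\!\left(\frac{|Tf|}{CM_0}\right)d\mu\leq \int_X E(\Phi(|f|))\,d\mu=\int_X \Phi(|f|)\,d\mu.$$
Note this chain uses no normalization on $f$, so it holds for every $f\in L^{\Phi}(\mu)$.

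Finally, a homogeneity argument finishes the proof: replacing $f$ by $f/\|f\|_{\Phi}$ (for $f\neq 0$) and using linearity of $T$, the above display becomes $\int_X \Phi\big(|Tf|/(CM_0\|f\|_{\Phi})\big)\,d\mu\leq \int_X \Phi(|f|/\|f\|_{\Phi})\,d\mu\leq 1$, whence $\|Tf\|_{\Phi}\leq CM_0\|f\|_{\Phi}$ by definition of the Orlicz norm. Thus $T$ is bounded with $\|T\|\leq C\,\|w\Psi^{-1}(E(\Psi(u)))\|_{\infty}$. I expect the only point demanding genuine care to be this last step: the Luxemburg norm is defined through an infimum, so I must justify the implication $\|g\|_{\Phi}\leq 1\Rightarrow \int_X \Phi(|g|)\,d\mu\leq 1$ (which follows from the monotonicity of $\Phi$ for $\|g\|_\Phi<1$ and from a monotone-convergence passage as $k\to 1^{+}$ when $\|g\|_{\Phi}=1$). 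This is the standard but slightly delicate modular--norm comparison; everything else reduces to a direct application of the GCH-inequality, the $L^{\infty}$ hypothesis, and the contraction and averaging properties of $E$.
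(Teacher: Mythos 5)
Your proof is correct and follows essentially the same route as the paper's: apply the GCH-inequality with $g=u$, absorb $|w|\Psi^{-1}(E(\Psi(|u|)))$ into the constant $M$ via the $L^\infty$ hypothesis, use $\Phi(\Phi^{-1}(x))\leq x$ together with the averaging identity $\int_X E(g)\,d\mu=\int_X g\,d\mu$, and conclude $N_\Phi(Tf)\leq CM N_\Phi(f)$; the only cosmetic difference is that the paper normalizes $f$ by $N_\Phi(f)$ at the outset whereas you do so at the end by homogeneity. Your explicit justification of the modular--norm comparison $\int_X\Phi(|f|/\|f\|_\Phi)\,d\mu\leq 1$ is a point the paper uses silently, so it is a welcome addition rather than a deviation.
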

 \begin{proof}
Since $(\Phi,\Psi)$ satisfies GCH-inequality, then there exists $C>0$ such that for all $f\in L^{\Phi}(\mu)$ and $g\in
L^{\Psi}(\mu)$ we have
$$E(|fg|)\leq C \Phi^{-1}(E(\Phi(|f|)))\Psi^{-1}(E(\Psi(|g|))).$$
Let $M=\|w\Psi^{-1}(E(\Psi(u)))\|_{\infty}$. For each $f\in L^{\Phi}(\mu)$ we have
\begin{align*}
\int_X\Phi(\frac{wE(uf)}{CMN_{\Phi}(f)}d\mu&\leq \int_X\Phi(\frac{wC \Phi^{-1}(E(\Phi(\frac{|f|}{N_{\Phi}(f)})))\Psi^{-1}(E(\Psi(|u|)))}{CM}d\mu\\
&\leq \int_X\Phi(\Phi^{-1}(E(\Phi(\frac{|f|}{N_{\Phi}(f)})))d\mu\\
&=\int_X\Phi(\frac{|f|}{N_{\Phi}(f)})d\mu\\
&\leq 1.
\end{align*}
By these observations we get that $N_{\Phi}(wE(uf))\leq CM N_{\Phi}(f)$, so $T$ is bounded.
 \end{proof}
Now in the next Lemma we see that for every $n\in \mathbb{N}$, $T^n$ is again a WCT operator.
 \begin{lem}\label{l2.3} Let $n\in \mathbb{N}$ and $T=M_wEM_u$ be a bounded operator on $L^{\Phi}(\Sigma)$. Then we have
$$T^n=M_{(E(uw))^{n-1}}M_{w}EM_{u}.$$
\end{lem}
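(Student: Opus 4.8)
The plan is to argue by induction on $n$, using as the sole engine the conditional-expectation identity $E(fg)=E(f)\,g$, valid whenever $g$ is $\mathcal{A}$-measurable (listed among the properties of $E$ in the preliminaries). The base case $n=1$ is immediate, since $(E(uw))^{0}=1$ gives $M_{(E(uw))^{0}}M_wEM_u=M_wEM_u=T$.

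For the inductive step, suppose $T^{n}=M_{(E(uw))^{n-1}}M_wEM_u$, so that $T^{n}f=(E(uw))^{n-1}\,wE(uf)$ for every $f$. Applying $T$ once more, I would compute
$$T^{n+1}f=T\big((E(uw))^{n-1}wE(uf)\big)=wE\big(u\,(E(uw))^{n-1}\,w\,E(uf)\big).$$
The crucial observation is that both $(E(uw))^{n-1}$ and $E(uf)$ are $\mathcal{A}$-measurable: the range of $E$ is by construction $\mathcal{A}$-measurable, and products and powers of $\mathcal{A}$-measurable functions remain $\mathcal{A}$-measurable. Hence the pull-out property lets me extract these two factors from under $E$, leaving only $uw$ inside, and I obtain
$$T^{n+1}f=w\,(E(uw))^{n-1}\,E(uf)\,E(uw)=(E(uw))^{n}\,wE(uf)=M_{(E(uw))^{n}}M_wEM_u\,f,$$
which is precisely the claimed formula at stage $n+1$.

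I do not anticipate any genuine obstacle here; the statement is essentially a bookkeeping identity. The one point requiring care is the justification that $E(uw)$ and $E(uf)$ are $\mathcal{A}$-measurable, so that the factor $(E(uw))^{n-1}$ produced by the previous step can legitimately be moved outside the conditional expectation together with $E(uf)$; once this is in place, repeated application of $E(fg)=E(f)g$ collapses the nested expression into a single additional power of $E(uw)$. Implicitly one also uses that $T$ maps $L^{\Phi}(\Sigma)$ into itself, which is guaranteed by the boundedness hypothesis, so that every iterate $T^{n}$ is well defined on the whole space.
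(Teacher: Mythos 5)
Your proof is correct and follows exactly the route the paper intends: the paper's own proof is just the one-line remark ``by induction and straightforward calculations,'' and your argument is precisely that induction, with the pull-out property $E(fg)=E(f)g$ for $\mathcal{A}$-measurable $g$ supplying the calculation. Nothing is missing; you have simply written out in full what the paper leaves to the reader.
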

 \begin{proof}
 By induction and straight forward calculations one can get the proof.
 \end{proof}
In the sequel we find that every bounded WCT operator has finite ascent.
 \begin{pro}\label{p2.4}
 Let the WCT operator $T=M_wEM_u$ be bounded on the Orlicz space $L^{\Phi}(\Sigma)$. The for every $n\in \mathbb{N}$, $\mathcal{N}(T^2)=\mathcal{N}(T^{n+2})$. As a result $\alpha(T)\leq 2$.
  \end{pro}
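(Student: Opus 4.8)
The plan is to reduce the statement to an elementary pointwise argument built on the explicit description of the iterates furnished by Lemma \ref{l2.3}. By that lemma, for every $m\in\mathbb{N}$ one has $T^m(f)=(E(uw))^{m-1}\,wE(uf)$ for all $f\in L^{\Phi}(\Sigma)$. In particular I would record the two formulas
$$T^2(f)=E(uw)\,wE(uf),\qquad T^{n+2}(f)=(E(uw))^{n+1}\,wE(uf).$$
Thus membership in either null space is governed by the \emph{same} factor $wE(uf)=T(f)$, multiplied only by a power of the fixed $\mathcal{A}$-measurable scalar function $E(uw)$.

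Since the null spaces form an increasing chain, the inclusion $\mathcal{N}(T^2)\subseteq\mathcal{N}(T^{n+2})$ holds automatically, so only the reverse inclusion needs attention. To obtain it I would introduce the support set $\sigma:=S(E(uw))=\{x\in X: E(uw)(x)\neq0\}$. On $\sigma$ every power $(E(uw))^{n+1}$ is nonzero, while off $\sigma$ every such power vanishes. Consequently, because these are complex-valued functions and $z^{n+1}=0$ exactly when $z=0$, the product $(E(uw))^{n+1}\,wE(uf)$ equals $0$ $\mu$-a.e. if and only if $wE(uf)=0$ $\mu$-a.e. on $\sigma$; the exponent $n+1$ is irrelevant. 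The very same criterion, with exponent $1$, describes $\mathcal{N}(T^2)$.

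Putting these together yields
$$\mathcal{N}(T^{n+2})=\{f\in L^{\Phi}(\Sigma): wE(uf)=0\ \mu\text{-a.e. on }\sigma\}=\mathcal{N}(T^2),$$
which is the asserted equality. Taking $n=1$ gives $\mathcal{N}(T^2)=\mathcal{N}(T^3)$, and by the general fact recalled in the introduction (once two consecutive null spaces coincide, so do all later ones) this forces $\alpha(T)\leq2$.

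I do not expect a genuine obstacle here: the whole argument rests on Lemma \ref{l2.3} together with the trivial observation that raising a scalar to a positive power does not alter its zero set. The only point demanding any care is the clean splitting of the ambient space into the fixed support $\sigma$ of $E(uw)$ and its complement, so that the power of $E(uw)$ can be seen to have no effect on the null space relative to the factor $wE(uf)$; everything else is routine bookkeeping.
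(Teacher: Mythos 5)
Your proof is correct and takes essentially the same route as the paper's: both rest on Lemma \ref{l2.3} to write $T^{m}(f)=(E(uw))^{m-1}\,wE(uf)$ and then note that the power of the scalar function $E(uw)$ has no effect on when this product vanishes $\mu$-a.e. The paper compresses your support-set argument (vanishing of $wE(uf)$ on $S(E(uw))$ is independent of the exponent) into the single assertion that $T^2(f)=0$ if and only if $T^n(f)=0$ for $n\geq 2$; your version merely makes that step explicit.
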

 \begin{proof} Let $n\in \mathbb{N}\setminus \{1\}$, then by Lemma \ref{l2.3} we have
 $$T^n(f)=(E(uw))^{n-1}wE(uf)$$
 and specially for $n=2$, $T^2(f)=(E(uw))wE(uf)$,  for $f\in L^{\Phi}(\mu)$. It is clear that $T^2(f)=0$ if and only if $T^n(f)=0$, for all $n\geq 2$. So
 $\mathcal{N}(T^2)=\mathcal{N}(T^{n})$.

 \end{proof}
 For our main results we need to find some sufficient conditions for closedness of range of WCT operator $T=M_wEM_u$. So in the next proposition we provide some conditions under which $T$ has closed range.

 \begin{pro}\label{p2.5} Let $(\Phi,\Psi)$ a pair of complementary Young's functions that satisfies GCH-inequality, WCT operator $T$ be bounded on $L^{\Phi}(\Sigma)$, $\mu(B)=0$ and the set $$H=\{n\in \mathbb{N}: w(A_n)\Psi^{-1}(E(\Psi(u)))(A_n)\neq 0\}$$ be finite. Then $T$ has closed range.
\end{pro}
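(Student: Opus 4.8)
The plan is to show that $\mathcal{R}(T)$ is in fact finite-dimensional, and hence automatically closed. The point is that the two standing hypotheses, $\mu(B)=0$ and the finiteness of $H$, together force every element of the range to be supported on one fixed finite union of atoms, and the functions supported there form a finite-dimensional space.

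First I would record the pointwise domination coming from the GCH-inequality. Writing $\phi=w\,\Psi^{-1}(E(\Psi(|u|)))$ and combining $|E(uf)|\le E(|uf|)$ (conditional Jensen for the convex function $|\cdot|$) with the GCH-inequality applied to the pair $(f,u)$, I get, for every $f\in L^{\Phi}(\mu)$,
$$
|Tf|=|w|\,|E(uf)|\le C\,|w|\,\Phi^{-1}\big(E(\Phi(|f|))\big)\,\Psi^{-1}\big(E(\Psi(|u|))\big)=C\,|\phi|\,\Phi^{-1}\big(E(\Phi(|f|))\big).
$$
In particular $Tf$ vanishes wherever $\phi$ vanishes, so $S(Tf)\subseteq S(\phi)$ for all $f\in L^{\Phi}(\mu)$.

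Next I would identify $S(\phi)$. Since $\mathcal{A}\subseteq\Sigma$, the function $E(\Psi(|u|))$ is $\Sigma$-measurable and hence constant on each atom $A_n$, and likewise $w$ is constant on $A_n$; thus $\phi$ takes on $A_n$ the constant value $w(A_n)\,\Psi^{-1}(E(\Psi(|u|)))(A_n)$, which is nonzero precisely when $n\in H$. Because $\mu(B)=0$, the atoms $\{A_n\}$ exhaust $X$ up to a null set, so $S(\phi)=\bigcup_{n\in H}A_n$ modulo a null set. As $H$ is finite and each atom of a $\sigma$-finite space has finite measure, $S(\phi)$ is a finite union of atoms.

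Finally, consider $V=\{g\in L^{\Phi}(\mu):S(g)\subseteq\bigcup_{n\in H}A_n\}$. Any $g\in V$ is constant on each $A_n$ with $n\in H$ and zero elsewhere, so the map $g\mapsto (g(A_n))_{n\in H}$ identifies $V$ with $\mathbb{C}^{|H|}$; thus $V$ is finite-dimensional. By the support inclusion above, $\mathcal{R}(T)\subseteq V$, so $\mathcal{R}(T)$ is a subspace of a finite-dimensional space, hence finite-dimensional and therefore closed. There is no serious analytic obstacle here: the only steps requiring care are the passage from the GCH bound to the support inclusion $S(Tf)\subseteq S(\phi)$, and the observation that the purely atomic structure together with the finiteness of $H$ collapses the relevant function space to finite dimension.
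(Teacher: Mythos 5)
Your proposal is correct and follows essentially the same route as the paper's own proof: the GCH-inequality gives the support inclusion $S(Tf)\subseteq S\big(w\,\Psi^{-1}(E(\Psi(|u|)))\big)$, the hypotheses $\mu(B)=0$ and $|H|<\infty$ identify that support with the finite union $\bigcup_{n\in H}A_n$, and the range is then trapped in the finite-dimensional space of functions supported on finitely many atoms, hence closed. The only (welcome) difference is your extra care in passing through $|E(uf)|\le E(|uf|)$ before invoking the GCH bound, a step the paper glosses over.
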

 \begin{proof}
Since $(\Phi,\Psi)$ satisfies GCH-inequality, then there exists $C>0$ such that for every $f\in L^{\Phi}(\mu)$,
$$wE(uf)\leq wC \Phi^{-1}(E(\Phi(|f|)))\Psi^{-1}(E(\Psi(|u|))),$$
so $$S(wE(uf))\subset S(w)\cap S(\Psi^{-1}(E(\Psi(|u|))))=S(w\Psi^{-1}(E(\Psi(|u|)))).$$
Moreover, by our assumptions we have
$$S_0:=S(w\Psi^{-1}(E(\Psi(|u|))))=\cup\{A_n:n\in H\}.$$
Hence we get that $\mathcal{R}(T)\subset L^{\Phi}(S_0, \Sigma_{S_0}, \mu_{S_0})$. Since $H$ is finite and $\Sigma$-atoms are disjoint, then we get that $L^{\Phi}(S_0, \Sigma_{S_0}, \mu_{S_0})$ is finite dimensional and therefore $\mathcal{R}(T)$ is finite dimensional. Consequently, $\mathcal{R}(T)$ is closed.
 \end{proof}
 Here we obtain a condition under which $T^n$ has closed range for all $n\in \mathbb{N}$.
 \begin{thm}\label{t2.6}
 Let $(\Phi,\Psi)$ a pair of complementary Young's functions that satisfies GCH-inequality, WCT operator $T$ be bounded on $L^{\Phi}(\Sigma)$, $\mu(B)=0$ and the set $$H_k=\{n\in \mathbb{N}: E(wu)^{k}(A_n).w(A_n)\Psi^{-1}(E(\Psi(u)))(A_n)\neq 0\}$$ be finite, for some $k\in\mathbb{N}$. Then $T^n$ has closed range, for all $n\in\mathbb{N}\setminus\{1\}$. Consequently, if
 $$H=H_0=\{n\in \mathbb{N}: E(wu)^{k}(A_n).w(A_n)\Psi^{-1}(E(\Psi(u)))(A_n)\neq 0\}$$
 is finite, then $T^n$ has closed range, for all $n\in\mathbb{N}$.
 \end{thm}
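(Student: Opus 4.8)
The plan is to mimic the argument of Proposition \ref{p2.5}, but applied to the iterates $T^n$ rather than to $T$ itself, and then to exploit a stabilization phenomenon among the sets $H_k$. First I would invoke Lemma \ref{l2.3} to write, for every $n\in\mathbb{N}\setminus\{1\}$,
$$T^n(f)=(E(uw))^{n-1}\,wE(uf),\qquad f\in L^{\Phi}(\mu).$$
Applying the GCH-inequality exactly as in Proposition \ref{p2.5} gives
$$|T^n(f)|\le C\,|E(uw)|^{n-1}\,|w|\,\Phi^{-1}(E(\Phi(|f|)))\,\Psi^{-1}(E(\Psi(|u|))),$$
so that the support of $T^n(f)$ is contained in
$$S_{n-1}:=S\big(E(uw)^{n-1}\,w\,\Psi^{-1}(E(\Psi(u)))\big).$$

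Next I would use the hypothesis $\mu(B)=0$, which makes the space purely atomic modulo null sets, to identify $S_{n-1}$ (up to a null set) with the union of those atoms on which the defining function is nonzero. Since each of $E(uw)$, $w$ and $\Psi^{-1}(E(\Psi(u)))$ is a.e. constant on every atom $A_m$, the product is nonzero on $A_m$ precisely when $m\in H_{n-1}$, whence $S_{n-1}=\bigcup\{A_m:m\in H_{n-1}\}$. Consequently $\mathcal{R}(T^n)\subseteq L^{\Phi}(S_{n-1},\Sigma_{S_{n-1}},\mu_{S_{n-1}})$, and it suffices to show this ambient space is finite-dimensional, i.e. that $H_{n-1}$ is finite.

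The crux is the observation that the chain $(H_k)_{k\ge1}$ is in fact constant. For a complex scalar $z$ one has $z^k\ne0$ if and only if $z\ne0$ whenever $k\ge1$; applied to $z=E(uw)(A_m)$ this shows that for every $k\ge1$ the condition defining membership of $m$ in $H_k$ reduces to ``$E(uw)(A_m)\ne0$ and $w(A_m)\Psi^{-1}(E(\Psi(u)))(A_m)\ne0$'', which no longer depends on $k$. Thus $H_1=H_2=\cdots$, and moreover $H_1\subseteq H_0$, so finiteness of a single $H_k$ forces $H_1$ to be finite and hence $H_{n-1}$ to be finite for every $n\ge2$. Since $\Sigma$-atoms are pairwise disjoint, $L^{\Phi}(S_{n-1},\Sigma_{S_{n-1}},\mu_{S_{n-1}})$ is then finite-dimensional, so $\mathcal{R}(T^n)$ is finite-dimensional and therefore closed.

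Finally, for the ``Consequently'' clause I would treat the remaining case $n=1$ separately: if $H_0$ is finite then, because $H_1\subseteq H_0$, the set $S_0=S\big(w\,\Psi^{-1}(E(\Psi(u)))\big)=\bigcup\{A_m:m\in H_0\}$ is a finite union of atoms, and Proposition \ref{p2.5} applies verbatim to give that $\mathcal{R}(T)$ is closed. Combining this with the previous paragraph yields closed range for all $n\in\mathbb{N}$. I expect the only genuinely delicate point to be the stabilization $H_1=H_2=\cdots$ of the defining sets; once that is isolated, the remainder is a direct transcription of the support estimate and finite-dimensionality argument of Proposition \ref{p2.5}.
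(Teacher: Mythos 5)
Your proposal is correct and follows essentially the same route as the paper: the paper also writes $T^n=M_{w_n}EM_u$ with $w_n=wE(uw)^{n-1}$, observes that the sets $H_k$ coincide for all $k\geq 1$ (your ``stabilization'' point, which the paper states as $H_n=H_m$ for $m\neq n$), and then invokes Proposition \ref{p2.5}, together with $H_k\subseteq H_0$ for the final clause. The only difference is cosmetic: you transcribe the support and finite-dimensionality argument of Proposition \ref{p2.5} inline for $n\geq 2$ instead of citing it with the modified weight $w_n$, and you are slightly more careful than the paper in noting that finiteness of $H_k$ for $k\geq 1$ alone does not control $H_0$, which is exactly why the case $n=1$ needs the separate hypothesis.
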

 \begin{proof}
As we had in the Proposition \ref{p2.4} for each $n\in \mathbb{N}$, $T^n=M_{w_n}EM_u$, in which $w_n=wE(uw)^{n-1}$, so $T^n$ is also a WCT operator. Moreover, by our assumptions, for every $m,n\in \mathbb{N}$, with $m\neq n$ we have $H_n=H_m$. Hence if $H_k$ is finite for some $k\in \mathbb{N}$, then by Proposition \ref{p2.5} we get that $T^n$ has closed range for all $n\in \mathbb{N}$. In addition if $H$ is finite then $T^n$ has closed range for all $n\in \mathbb{N}$, because $H_k\subseteq H$, for every $k\in \mathbb{N}$.
 \end{proof}
In the following we have some consequences on closedness of range of $T$ and some decompositions for $L^{\Phi}(\mu)$.
 \begin{rem}\label{r2.7}
 Let WCT operator $T=M_wEM_u$ be bounded on $L^{\Phi}(\Sigma)$. If $\mathcal{R}(M_{E(wu)^{n-1}}T)$ is closed for some $n>2$ or $\mathcal{R}(M_{E(wu)^j}T)+\mathcal{N}(M_{E(wu)^k}T)$ is closed for some positive integers with $j+k=n$, then for $\mathcal{R}(T^n)=\mathcal{R}(M_{E(wu)^{n-1}}T)$ is closed for all $n\geq 2$ and $\mathcal{R}(M_{E(wu)^j}T)+\mathcal{N}(M_{E(wu)^k}T)$ is closed for all $j+k\geq 2$. Moreover, $L^{\Phi}(\Sigma)=\mathcal{R}(M_{E(wu)}T)+\mathcal{N}(M_{E(wu)}T)$.
 \end{rem}
 \begin{proof}
As we proved in Proposition \ref{p2.4}, $\alpha(T)\leq 2$. Therefore by Theorem 2.1 of \cite{gz} we get the proof.
 \end{proof}
 Now by mixing Theorem \ref{t2.6} and Remark \ref{r2.7} we get the next corollary.
 \begin{cor}
 Let $(\Phi,\Psi)$ a pair of complementary Young's functions that satisfies GCH-inequality, WCT operator $T$ be bounded on $L^{\Phi}(\Sigma)$, $\mu(B)=0$ and the set $$H_k=\{n\in \mathbb{N}: E(wu)^{k}(A_n).w(A_n)\Psi^{-1}(E(\Psi(u)))(A_n)\neq 0\}$$ be finite, for some $k\in\mathbb{N}$. Then  $\mathcal{R}(M_{E(wu)^j}T)+\mathcal{N}(M_{E(wu)^k}T)$ is closed for all $j+k\geq 2$. Moreover, $L^{\Phi}(\Sigma)=\mathcal{R}(M_{E(wu)}T)+\mathcal{N}(M_{E(wu)}T)$.
\end{cor}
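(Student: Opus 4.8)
The plan is to derive this corollary by feeding the output of Theorem \ref{t2.6} directly into Remark \ref{r2.7}, since the hypotheses listed here—that $(\Phi,\Psi)$ is a complementary pair satisfying the GCH-inequality, that $T=M_wEM_u$ is bounded on $L^{\Phi}(\Sigma)$, that $\mu(B)=0$, and that $H_k$ is finite for some $k\in\mathbb{N}$—are precisely those of Theorem \ref{t2.6}. Thus no new analytic work is required; the task is purely one of assembling the two preceding results correctly.

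First I would apply Theorem \ref{t2.6} verbatim to conclude that $T^n$ has closed range for every $n\in\mathbb{N}\setminus\{1\}$, that is, for all $n\geq 2$. Next I would invoke Lemma \ref{l2.3}, which gives $T^n=M_{(E(uw))^{n-1}}M_wEM_u=M_{E(wu)^{n-1}}T$, so that
$$\mathcal{R}(T^n)=\mathcal{R}\big(M_{E(wu)^{n-1}}T\big).$$
Consequently $\mathcal{R}\big(M_{E(wu)^{n-1}}T\big)$ is closed for every $n\geq 2$, and in particular for some $n>2$.

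With this in hand, the first alternative hypothesis of Remark \ref{r2.7}—closedness of $\mathcal{R}(M_{E(wu)^{n-1}}T)$ for some $n>2$—is satisfied. Applying Remark \ref{r2.7} then yields both asserted conclusions at once: the sum $\mathcal{R}(M_{E(wu)^j}T)+\mathcal{N}(M_{E(wu)^k}T)$ is closed for all positive integers with $j+k\geq 2$, and the decomposition
$$L^{\Phi}(\Sigma)=\mathcal{R}(M_{E(wu)}T)+\mathcal{N}(M_{E(wu)}T)$$
holds. Here Remark \ref{r2.7} itself rests on the bound $\alpha(T)\leq 2$ from Proposition \ref{p2.4} together with Theorem 2.1 of \cite{gz}, so these are what ultimately underpin the decomposition.

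Since every step is a direct citation, there is essentially no analytic obstacle; the only point deserving care is the bookkeeping already internal to Theorem \ref{t2.6}, namely that finiteness of $H_k$ for a single value of $k$ forces $H_n=H_m$ for all $m,n$ and hence closed range of every iterate. Once that equivalence is granted, the present statement is a formal consequence of Theorem \ref{t2.6} and Remark \ref{r2.7}, exactly as its placement in the text suggests.
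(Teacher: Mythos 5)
Your proposal is correct and follows exactly the paper's route: the paper offers no separate argument for this corollary beyond the remark that it is obtained ``by mixing'' Theorem \ref{t2.6} and Remark \ref{r2.7}, which is precisely what you do, with the added (and welcome) explicit step that Lemma \ref{l2.3} identifies $\mathcal{R}(T^n)$ with $\mathcal{R}\big(M_{E(wu)^{n-1}}T\big)$ so that the hypothesis of Remark \ref{r2.7} is verified for some $n>2$.
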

 \begin{pro}\label{p2.9}
 If $w\Psi^{-1}(E(\Psi(u)))\in L^{\infty}(\Sigma)$, then the following hold:\\

 a) The sequence $\{\|E(wu)\|_{\infty}\}_{n}$ is uniformly bounded if and only if $\|E(wu)\|_{\infty}\leq 1$.\\

 b) The WCT operator $T$ is power bounded on the Orlicz space $L^{\Phi}(\Sigma)$ if and only if $|E(wu)|<1$ on $S(\Phi^{-1}(E(\Phi(w))))\cap S(\Psi^{-1}(E(\Psi(u))))$.
 \end{pro}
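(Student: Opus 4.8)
The plan is to reduce both statements to the elementary behaviour of the powers of the single $\mathcal{A}$-measurable function $g:=E(wu)$, using Lemma \ref{l2.3} to linearise $T^n$. By that lemma $T^n=M_{g^{n-1}}M_{w}EM_u=M_{g^{n-1}w}EM_u$, so $T^n$ is again a WCT operator with weight $w_n=g^{n-1}w$; applying the boundedness theorem to $T^n$ yields the upper estimate
$$\|T^n\|\le C\,\big\|g^{n-1}\,w\,\Psi^{-1}(E(\Psi(u)))\big\|_{\infty}=C\,\|g^{n-1}h\|_{\infty},\qquad h:=w\,\Psi^{-1}(E(\Psi(u))).$$
For part (a) I would simply note that $\|g^{n}\|_{\infty}=\|g\|_{\infty}^{\,n}$ because $t\mapsto t^{n}$ is increasing on $[0,\infty)$; hence the geometric sequence $\{\|g\|_{\infty}^{\,n}\}_n$ is bounded precisely when its base satisfies $\|g\|_{\infty}\le1$, which is the asserted equivalence (the standing hypothesis $h\in L^{\infty}(\Sigma)$ serves, through the GCH-inequality, only to guarantee that $g\in L^{\infty}(\Sigma)$, so that $\|g\|_{\infty}$ is meaningful).

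The next step is the support bookkeeping that converts the abstract estimate into the stated set $S_0:=S(\Phi^{-1}(E(\Phi(w))))\cap S(\Psi^{-1}(E(\Psi(u))))$. Using that $\Phi^{-1}$ and $\Psi^{-1}$ vanish only at $0$, one has $S(\Phi^{-1}(E(\Phi(w))))=S(E(\Phi(w)))$ and likewise for $\Psi$; Lemma \ref{l1n} then identifies these with $S(E(|w|))$ and $S(E(|u|))$, so $S_0$ is $\mathcal{A}$-measurable. The GCH-inequality $|g|=|E(wu)|\le E(|wu|)\le C\,\Phi^{-1}(E(\Phi(w)))\Psi^{-1}(E(\Psi(u)))$ gives $S(g)\subseteq S_0$, and $S(w)\subseteq S(E(|w|))$ gives $S(h)\subseteq S_0$. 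Thus $g^{n-1}h$ is supported inside $S_0$, and the whole question of power boundedness is governed by the size of $|g|$ on $S_0$.

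The sufficiency in (b) I would read off from the upper estimate: if $|g|<1$ on $S_0$, then $|g|^{n-1}\le1$ on $S(h)\subseteq S_0$, whence $\|g^{n-1}h\|_{\infty}\le\|h\|_{\infty}$ and $\|T^n\|\le C\|h\|_{\infty}$ for all $n$, so $T$ is power bounded. The harder direction — the part I expect to be the main obstacle — is necessity, where an upper estimate no longer suffices and one must exhibit test functions forcing $\|T^n\|\to\infty$ once $|g|>1$ on a positive-measure subset of $S_0$. Writing $X=B\cup\bigcup_m A_m$, I would split into the atomic and diffuse parts: on an atom $A_m\subseteq S_0$ with $|g(A_m)|>1$, choosing $f$ with $E(uf)(A_m)\neq0$ gives $\|T^nf\|_{\Phi}\ge|g(A_m)|^{\,n-1}\|Tf\,\mathbf{1}_{A_m}\|_{\Phi}\to\infty$; on the non-atomic part one fixes $\delta>0$ with $\mu(\{|g|\ge1+\delta\}\cap S_0)>0$, chooses $f$ (built from the sign of $u$) so that $wE(uf)\neq0$ there, and invokes monotonicity of the Orlicz norm to obtain $\|T^nf\|_{\Phi}\ge(1+\delta)^{n-1}\|wE(uf)\,\mathbf{1}_{\{|g|\ge1+\delta\}}\|_{\Phi}\to\infty$. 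The delicate point throughout is that the range of $T$ lives on $S(w)$ rather than on $S(E(w))$, so the test functions must be supported where $w$ itself — not merely $E(w)$ — is nonzero; reconciling this with the $\mathcal{A}$-measurable set $S_0$ through Lemma \ref{l1n} is what makes the necessity argument the technical heart of the proof.
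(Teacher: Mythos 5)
Your proposal follows essentially the same route as the paper's own proof: both use Lemma \ref{l2.3} to write $T^n=M_{E(wu)^{n-1}}M_wEM_u$, use GCH-inequality support bookkeeping to confine $g=E(wu)$ and $h=w\Psi^{-1}(E(\Psi(u)))$ to $S_0=S(\Phi^{-1}(E(\Phi(w))))\cap S(\Psi^{-1}(E(\Psi(u))))$, deduce sufficiency from $\|g^{n-1}h\|_{\infty}\leq\|h\|_{\infty}$, and deduce necessity from the fact that $|g|>1$ on a positive-measure subset of $S_0$ forces $\|T^n\|\to\infty$. If anything, you supply more than the paper: part (a) is proved directly (via $\|g^n\|_{\infty}=\|g\|_{\infty}^n$) where the paper cites Theorem 2.5(a) of \cite{e1}, and your explicit test-function construction (atomic/non-atomic split, monotonicity of the Orlicz norm) fills in the necessity step that the paper merely asserts from its support identities; note, however, that like the paper you only rule out $|g|>1$, so neither argument handles the boundary case $|g|=1$ on a set of positive measure (e.g.\ $u=w=1$, $\mathcal{A}=\Sigma$, $T=I$ is power bounded), which the literal ``only if'' in (b) would require.
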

 \begin{proof}
a) For the proof of this part one can see Theorem 2.5, part (a), \cite{e1}.\\
b) Let $T$ be power bounded. Then there exists $D>0$ such that
$$\|T^n\|=\|M_{E(wu)^{n-1}}T\|\leq D, \ \ \ \ \forall n\in \mathbb{N}.$$
As we discussed before, since $(\Phi,\Psi)$ satisfies GCH-inequality, then for every $f\in L^{\Phi}(\mu)$,
$$T(f)=wE(uf)\leq wC \Phi^{-1}(E(\Phi(|f|)))\Psi^{-1}(E(\Psi(|u|))),$$
for some $C>0$. Hence
\begin{align*}
S(wE(uf))&\subset S(w)\cap S(\Psi^{-1}(E(\Psi(|u|))))\\
&=S(w\Psi^{-1}(E(\Psi(|u|))))\\
&\subseteq S(\Psi^{-1}(E(\Psi(|w|))))S(\Psi^{-1}(E(\Psi(|u|)))).
\end{align*}
Also, by GCH-inequality we have
$$S(E(wu)\subseteq S(\Psi^{-1}(E(\Psi(|w|))))S(\Psi^{-1}(E(\Psi(|u|)))).$$
Moreover, $S(T^n(f))=S(E(wu)^{n-1})\cap S(Tf)$. By these observations we get that if $\|E(wu)\|>1$ on a set of positive measure, then the non-zero WCT operator $T$ can't be power bounded.\\
Conversely, let
$$|E(wu)|<1 \ \ \ \text{on} \  \ S(\Phi^{-1}(E(\Phi(w))))\cap S(\Psi^{-1}(E(\Psi(u)))).$$ Then $\|E(wu)^n\|\leq C$ or some $C>0$ and so for every $n\in \mathbb{N}$,
$\|T^n\|\leq C\|T\|$. Thus $T$ is power bounded.
\end{proof}
In the sequel we provide some conditions under which WCT operators have finite descent.

 \begin{thm}
 If $w\Psi^{-1}(E(\Psi(u)))\in L^{\infty}(\Sigma)$, and $E(uw)$ is bounded away from zero, then for each $n\in \mathbb{N}$, $\mathcal{R}(T^{n+2})=\mathcal{R}(T^{2})$ and so $T$ has finite descent.
  \end{thm}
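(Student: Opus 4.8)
The plan is to work directly from the closed form of the iterates supplied by Lemma~\ref{l2.3}, namely $T^n(f)=(E(uw))^{n-1}\,wE(uf)$ for $f\in L^\Phi(\mu)$, and to read the two hypotheses as follows: the condition $w\Psi^{-1}(E(\Psi(u)))\in L^\infty(\Sigma)$ ensures, by the boundedness criterion established above, that $T$ is a bounded operator on $L^\Phi(\mu)$; the condition that $E(uw)$ be bounded away from zero means there is some $\delta>0$ with $|E(uw)|\geq\delta$ $\mu$-a.e., so that $(E(uw))^{-1}$ is a well-defined, bounded, $\mathcal A$-measurable function.

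Since the ranges of the iterates always form a nested chain, the inclusion $\mathcal R(T^{n+2})\subseteq\mathcal R(T^2)$ holds automatically, and the whole statement reduces to proving the reverse inclusion $\mathcal R(T^2)\subseteq\mathcal R(T^{n+2})$ for each fixed $n\in\mathbb N$. The idea for this is to absorb the surplus powers of $E(uw)$ by pre-dividing the argument. Concretely, I would take an arbitrary $g\in\mathcal R(T^2)$, write $g=T^2(f)=(E(uw))\,wE(uf)$ with $f\in L^\Phi(\mu)$, and set $h=(E(uw))^{-n}f$.

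First one checks $h\in L^\Phi(\mu)$: since $|(E(uw))^{-n}|\leq\delta^{-n}$, multiplication by this bounded function preserves the Orlicz space, as $N_\Phi(\phi f)\leq\|\phi\|_\infty N_\Phi(f)$ for every $\phi\in L^\infty(\Sigma)$. Next, because $(E(uw))^{-n}$ is $\mathcal A$-measurable, the pull-out property $E(fg)=E(f)g$ (valid for $\mathcal A$-measurable $g$) yields $E(uh)=(E(uw))^{-n}E(uf)$, and hence
$$T^{n+2}(h)=(E(uw))^{n+1}\,wE(uh)=(E(uw))^{n+1}(E(uw))^{-n}\,wE(uf)=(E(uw))\,wE(uf)=g.$$
Thus $g\in\mathcal R(T^{n+2})$, which gives $\mathcal R(T^2)\subseteq\mathcal R(T^{n+2})$ and therefore equality. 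Taking $n=1$ shows $\mathcal R(T^2)=\mathcal R(T^3)$, so $\delta(T)\leq 2$ and $T$ has finite descent.

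The only delicate point is the passage to $h$: one must be certain both that dividing by $E(uw)$ does not leave the Orlicz space and that the resulting factor commutes correctly with the conditional expectation. The uniform lower bound $\delta$ settles the former (it makes $(E(uw))^{-n}$ genuinely bounded), and the $\mathcal A$-measurability of $E(uw)$ settles the latter (it legitimizes pulling the factor through $E$). Once these two facts are secured, the identity $T^{n+2}(h)=g$ is a one-line computation, and no further estimate is needed.
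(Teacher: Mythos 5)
Your proposal is correct and follows essentially the same route as the paper: both take $g=T^2(f)=E(uw)\,wE(uf)$, divide $f$ by the $\mathcal{A}$-measurable factor $E(uw)^{n}$ (legitimate since the uniform lower bound keeps the quotient in $L^{\Phi}(\mu)$), and use the pull-out property together with Lemma~\ref{l2.3} to recover $g=T^{n+2}(h)$. The only cosmetic difference is that the paper multiplies by $\chi_{S(E(wu))}$, which allows the hypothesis to be read as ``bounded away from zero on its support,'' whereas your version assumes the a.e.\ lower bound; under that reading the two arguments coincide.
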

 \begin{proof}
It is clear that for any $n\in\mathbb{N}$, $\mathcal{R}(T^{n+2})\subseteq \mathcal{R}(T^2)$. For the converse, let $g\in \mathcal{R}(T^2)$. Then there exists $f\in L^{\Phi}(\mu)$ such that $T^2(f)=E(wu)T(f)$. Also, since $E(uw)$ is bounded away from zero, then $\frac{1}{E(wu)}\chi_{S(E(wu))}\in L^{\infty}(\mu)$. Hence $\frac{1}{E(wu)^n}\chi_{S(E(wu))}f\in L^{\Phi}(\mu)$.
\begin{align*}
g&=E(wu)T(f)\\
&=\frac{1}{E(wu)^n}\chi_{S(E(wu))}E(wu)^{n+1}T(f)\\
&=E(wu)^{n+1}T(\frac{1}{E(wu)^n}\chi_{S(E(wu))}f)\\
&=T^{n+2}(\frac{1}{E(wu)^n}\chi_{S(E(wu))}f).
\end{align*}
This implies that $g\in \mathcal{R}(T^{n+2})$. Consequently we have $\mathcal{R}(T^{n+2})=\mathcal{R}(T^{2})$.
 \end{proof}

\begin{cor}
Let $w\Psi^{-1}(E(\Psi(u)))\in L^{\infty}(\Sigma)$, $E(wu)$ be bounded away from zero and $T=M_wEM_u$ be WCT operator on the Orlicz spaces $L^{\Phi}(\Sigma)$. Then $\delta(T)\leq 2$.
\end{cor}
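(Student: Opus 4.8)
The plan is to deduce the bound on the descent directly from the range-stabilization established in the preceding theorem, so that essentially no new work is required beyond unwinding the definition of descent. First I would observe that the hypotheses of the corollary coincide verbatim with those of the theorem: both ask that $w\Psi^{-1}(E(\Psi(u)))\in L^{\infty}(\Sigma)$ and that $E(wu)=E(uw)$ be bounded away from zero. Hence the theorem applies and furnishes $\mathcal{R}(T^{n+2})=\mathcal{R}(T^{2})$ for every $n\in\mathbb{N}$.

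Next I would recall that $\delta(T)$ is, by definition, the smallest non-negative integer $k$ for which $\mathcal{R}(T^{k})=\mathcal{R}(T^{k+1})$, together with the general fact noted in the Introduction that once the nested range chain $X=\mathcal{R}(T^{0})\supset\mathcal{R}(T)\supset\mathcal{R}(T^{2})\supset\ldots$ stabilizes at some index it remains constant thereafter. Specializing the theorem's conclusion to $n=1$ gives $\mathcal{R}(T^{3})=\mathcal{R}(T^{2})$, i.e. $\mathcal{R}(T^{2})=\mathcal{R}(T^{2+1})$. Thus $k=2$ meets the stabilization condition, and since $\delta(T)$ is the least such index, we conclude $\delta(T)\leq 2$.

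There is no genuine obstacle here: the corollary is merely a restatement of the theorem in the language of descent. The only point worth making explicit is that $n=1$ is an admissible choice in the theorem's conclusion, so that the identity being invoked compares the \emph{consecutive} powers $T^{2}$ and $T^{3}$; this is immediate since $1\in\mathbb{N}$. For completeness one might also remark that this does not claim $\delta(T)=2$ exactly, only the upper bound, which is all that is asserted.
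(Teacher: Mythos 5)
Your proposal is correct and is exactly the deduction the paper intends: the corollary appears immediately after the theorem with no separate proof, precisely because taking $n=1$ in the conclusion $\mathcal{R}(T^{n+2})=\mathcal{R}(T^{2})$ gives $\mathcal{R}(T^{3})=\mathcal{R}(T^{2})$, so the range chain stabilizes at index $2$ and $\delta(T)\leq 2$ by definition. Nothing is missing; your explicit unwinding of the definition of descent is all the paper's (implicit) argument amounts to.
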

Till now we have obtained that bounded WCT operators $T=M_wEM_u$ have finite ascent with $\alpha(T)\leq 2$ and also under a weak conditions have finite descent with $\delta(T)\leq 2$. In the next Proposition some more results affected by finite ascent and descent.
\begin{pro}\label{p2.12}
Let $T=M_wEM_u$ be bounded WCT operator on the Orlicz spaces $L^{\Phi}(\Sigma)$. Then $\mathcal{R}(T^2)\cap \mathcal{N}(T^m)=\{0\}$, for every $m\geq 1$. Also, if $E(wu)$ be bounded away from zero, then $L^{\Phi}(\mu)=\mathcal{R}(T^n)+\mathcal{N}(T^2)$, for some (equivalently, all) $n\geq 1$.
\end{pro}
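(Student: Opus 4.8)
The plan is to treat both assertions as formal consequences of the ascent and descent bounds already in hand, rather than by returning to the explicit form $T^n=M_{(E(uw))^{n-1}}M_wEM_u$. For the intersection claim the only input I need is $\alpha(T)\le 2$ from Proposition \ref{p2.4}, and for the decomposition the only extra input is the descent bound $\delta(T)\le 2$, which holds precisely because $E(wu)$ is assumed bounded away from zero.

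For the first claim I would first reduce to a single value of $m$. Since the null spaces are nested we have $\mathcal{N}(T)\subseteq\mathcal{N}(T^2)$, while Proposition \ref{p2.4} gives $\mathcal{N}(T^m)=\mathcal{N}(T^2)$ for every $m\ge 2$; hence $\mathcal{N}(T^m)\subseteq\mathcal{N}(T^2)$ for all $m\ge 1$, and it suffices to prove $\mathcal{R}(T^2)\cap\mathcal{N}(T^2)=\{0\}$. To do this, take $y\in\mathcal{R}(T^2)\cap\mathcal{N}(T^2)$, write $y=T^2x$, and note that $T^2y=0$ forces $T^4x=0$, so $x\in\mathcal{N}(T^4)$. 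Because $\alpha(T)\le 2$ we have $\mathcal{N}(T^4)=\mathcal{N}(T^2)$, whence $T^2x=0$, i.e. $y=0$. This is the standard argument showing $\mathcal{R}(T^p)\cap\mathcal{N}(T^p)=\{0\}$ whenever $\alpha(T)\le p$, specialised to $p=2$.

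For the decomposition I would use the finite descent. Under the hypothesis that $E(wu)$ is bounded away from zero we have $\delta(T)\le 2$, so $\mathcal{R}(T^2)=\mathcal{R}(T^4)$. Given any $x\in L^{\Phi}(\mu)$, then $T^2x\in\mathcal{R}(T^2)=\mathcal{R}(T^4)$, so $T^2x=T^4z$ for some $z$; writing $x=T^2z+(x-T^2z)$ exhibits $x$ as a sum of $T^2z\in\mathcal{R}(T^2)$ and $x-T^2z\in\mathcal{N}(T^2)$, the latter because $T^2(x-T^2z)=T^2x-T^4z=0$. Thus $L^{\Phi}(\mu)=\mathcal{R}(T^2)+\mathcal{N}(T^2)$. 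Finally, to pass to an arbitrary $n$, I would observe that $\mathcal{R}(T^n)\supseteq\mathcal{R}(T^2)$ for every $n\ge 1$ --- by nesting when $n=1$ and by the descent equality $\mathcal{R}(T^n)=\mathcal{R}(T^2)$ when $n\ge 2$ --- so $\mathcal{R}(T^n)+\mathcal{N}(T^2)\supseteq\mathcal{R}(T^2)+\mathcal{N}(T^2)=L^{\Phi}(\mu)$. This gives equality for all $n$ at once, which is what the phrase ``for some (equivalently, all) $n\ge 1$'' records.

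I do not expect a genuine obstacle: the whole content sits in the two abstract facts ``$\alpha(T)\le p$ implies $\mathcal{R}(T^p)\cap\mathcal{N}(T^p)=\{0\}$'' and ``$\delta(T)\le q$ implies $X=\mathcal{R}(T^q)+\mathcal{N}(T^q)$''. The only points requiring care are the reductions over $m$ and $n$ through the kernel and range nesting, and keeping track of exactly where the ``bounded away from zero'' hypothesis enters --- namely only in the descent half, the intersection claim being valid for every bounded WCT operator.
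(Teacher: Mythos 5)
Your proof is correct and takes essentially the same route as the paper: both treat the proposition as a purely formal consequence of the ascent bound $\alpha(T)\le 2$ (Proposition \ref{p2.4}) and the range equality $\mathcal{R}(T^{n+2})=\mathcal{R}(T^{2})$ coming from the bounded-away-from-zero hypothesis on $E(wu)$. The only cosmetic difference is that the paper invokes the known identities $T^2(\mathcal{N}(T^{2+n}))=\mathcal{R}(T^2)\cap\mathcal{N}(T^n)$ and $T^{-2}(\mathcal{R}(T^{2+n}))=\mathcal{R}(T^n)+\mathcal{N}(T^2)$ and substitutes, whereas you reprove the needed inclusions by the standard element-chasing arguments.
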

\begin{proof}
As it is known $$T^2(\mathcal{N}(T^{2+n}))=\mathcal{R}(T^2)\cap \mathcal{N}(T^n).$$
This implies that $\mathcal{R}(T^2)\cap \mathcal{N}(T^n)=\{0\}$, because $\mathcal{N}(T^2)=\mathcal{N}(T^{n+2})$.\\
Moreover, we know that $T^{-2}(\mathcal{R}(T^{2+n})=\mathcal{R}(T^n)+\mathcal{N}(T^2)$ and this equation gives us $L^{\Phi}(\mu)=\mathcal{R}(T^n)+\mathcal{N}(T^2)$, for some (equivalently, all) $n\geq 1$. This completes the proof.
\end{proof}
For a bounded linear operator $T$ on an arbitrary Banach space the Cesaro means is defined as
$$A_n(T)=\frac{I+T+T^2+...+T^{n-1}}{n}\ \ \ \ \ \ n\in \mathbb{N}.$$
Ergodic theory is concerned with the existence of the limit of the sequence $\{A_n(T)\}_{n\in \mathbb{N}}$ in various operator topologies. For investigation the convergence of this sequence one can use the following simple formulas(\cite{kr}, \cite{ly} and \cite{mb})\\
\begin{equation}\label{e2.1}
  \frac{T^n}{n}=\frac{n+1}{n}A_{n+1}(T)-A_n(T),
\end{equation}
\begin{equation}\label{e2.2}
  (I-T)A_n(T)=\frac{I-T^n}{n}
\end{equation}
\begin{equation}\label{e2.3}
  I-A_n(T)=(I-T)\frac{T^{n-2}+2T^{n-3}+...+(n-2)T+(n-1)I}{n}.
\end{equation}
 If we apply it for the bounded WCT operators $T=M_wEM_u$ on the Banach space $L^{\Phi}(\mu)$, then we have
$$A_n(T)=n^{-1}(I+M_{v_{n}}T), \ \ \ \ \ \ \forall n\in \mathbb{N}\setminus\{1\},$$

and $A_{1}(T)=I$, in which $v_n=\sum^{n-2}_{i=0}E(uw)^{i}$.\\
\begin{pro}
Let $|E(wu)|<1$ on $S(\Phi^{-1}(E(\Phi(w))))\cap S(\Psi^{-1}(E(\Psi(u))))$, $T=M_wEM_u$ be bounded WCT operator on the Orlicz spaces $L^{\Phi}(\Sigma)$. Then $\alpha(I-T)\leq 1$ and $\alpha(I-T^*)\leq 1$.
\end{pro}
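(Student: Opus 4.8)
The plan is to reduce both claims to a single, purely operator-theoretic fact: if $S$ is any power bounded operator on a Banach space, then $\alpha(I-S)\leq 1$. First I would invoke Proposition \ref{p2.9}(b): the hypothesis $|E(wu)|<1$ on $S(\Phi^{-1}(E(\Phi(w))))\cap S(\Psi^{-1}(E(\Psi(u))))$ is precisely the condition guaranteeing that $T$ is power bounded on $L^{\Phi}(\Sigma)$, so there is a constant $C>0$ with $\|T^n\|\leq C$ for all $n$.

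Next I would establish the ascent bound for $T$ directly from power boundedness. Take any $x\in\mathcal{N}((I-T)^2)$ and set $y=(I-T)x$, so that $Ty=y$ and hence $T^iy=y$ for every $i\geq 0$. Averaging gives $A_n(T)y=y$ for all $n$. On the other hand, since $A_n(T)$ is a polynomial in $T$ it commutes with $I-T$, and formula \eqref{e2.2} yields
$$y=A_n(T)y=A_n(T)(I-T)x=(I-T)A_n(T)x=\frac{(I-T^n)x}{n}=\frac{x-T^nx}{n}.$$
Power boundedness now forces $\|y\|\leq n^{-1}(1+C)\|x\|\to 0$ as $n\to\infty$, so $y=0$; that is, $x\in\mathcal{N}(I-T)$. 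This proves $\mathcal{N}((I-T)^2)\subseteq\mathcal{N}(I-T)$, and since the reverse inclusion is automatic we conclude $\alpha(I-T)\leq 1$.

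For the adjoint I would observe that the same argument applies verbatim to $S=T^*$: the Cesaro formulas \eqref{e2.1}--\eqref{e2.3} hold for an arbitrary bounded operator, and $T^*$ is itself power bounded because $\|(T^*)^n\|=\|(T^n)^*\|=\|T^n\|\leq C$. Running the telescoping argument with $T$ replaced by $T^*$ then gives $\alpha(I-T^*)\leq 1$ as well.

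I do not expect a serious obstacle here, as the argument is the standard mean-ergodic decomposition step. The only points needing care are the correct application of \eqref{e2.2} (so that the telescoping $\tfrac{1}{n}(I-T^n)$ appears after commuting $I-T$ past $A_n(T)$) and the observation that passing to the limit in $n$ uses only the uniform bound $\|T^n\|\leq C$, not any convergence of the Cesaro means themselves.
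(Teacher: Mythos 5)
Your proof is correct, and it follows the same overall reduction as the paper: both start by invoking Proposition \ref{p2.9}(b) to conclude that $T$ is power bounded. Where you diverge is in how the implication ``power bounded $\Rightarrow\alpha(I-T)\leq 1$'' is justified. The paper does not argue this itself: it notes that $T^nf/n\to 0$ for $f\in\mathcal{N}((I-T)^2)$ and then cites Lemma 1.3 and Theorem 3.2 of \cite{gz}, which simultaneously deliver the adjoint statement $\alpha(I-T^*)\leq 1$. You instead prove the needed operator-theoretic fact from scratch: for $x\in\mathcal{N}((I-T)^2)$ and $y=(I-T)x$, the fixed-point relation $A_n(T)y=y$ combined with the telescoping identity \eqref{e2.2} gives $y=(x-T^nx)/n$, and power boundedness forces $y=0$. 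This is precisely the mechanism inside the cited lemma of \cite{gz}, so your argument unpacks the external reference into a self-contained, elementary proof; what it buys is transparency (the reader sees exactly where power boundedness enters and that no convergence of Cesaro means is needed), at the cost of a few lines. Your handling of the adjoint, via $\left\Vert (T^*)^n\right\Vert=\left\Vert (T^n)^*\right\Vert=\left\Vert T^n\right\Vert\leq C$ and rerunning the same telescoping argument on the dual space, is also sound and is arguably cleaner than the paper's treatment, since the paper never explains why the cited results apply to $T^*$ as well.
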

\begin{proof}
Under our assumptions and by the Proposition \ref{p2.9} we get that $T$ is power bounded. So by the above observations we get that the sequence $\{\frac{T^n}{n}(f)\}$ tends to zero for every $f\in \mathcal{N}((I-T)^2)$. Hence by the Lemma 1.3  and Theorem 3.2 of \cite{gz} we get the result.
\end{proof}
Here, in the next theorem, we provide a dense subset of $L^{\Phi}(\mu)$ by means of the finite ascent and descent of bounded WCT operators.
\begin{thm}
Let $T=M_wEM_u$ be bounded WCT operator on the Orlicz spaces $L^{\Phi}(\Sigma)$ such that $\mathcal{R}(T^2)$ is closed. Then $\mathcal{R}(T^2)\cap \mathcal{N}(T^2)=\{0\}$ and $\mathcal{R}(T^2)+\mathcal{N}(T^2)$ is dense in $L^{\Phi}(\mu)$.
\end{thm}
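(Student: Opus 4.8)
The plan is to establish the two assertions separately, leaning on the finite-ascent result ($\alpha(T)\leq 2$) from Proposition \ref{p2.4} and on general duality/decomposition facts for operators with finite ascent.

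\smallskip
\textbf{The trivial intersection.} First I would observe that $\mathcal{R}(T^2)\cap\mathcal{N}(T^2)=\{0\}$ is essentially immediate from finite ascent. Indeed, Proposition \ref{p2.12} already records the identity $T^2\big(\mathcal{N}(T^{2+m})\big)=\mathcal{R}(T^2)\cap\mathcal{N}(T^m)$; specializing $m=2$ gives $\mathcal{R}(T^2)\cap\mathcal{N}(T^2)=T^2\big(\mathcal{N}(T^4)\big)$, and since $\alpha(T)\leq 2$ forces $\mathcal{N}(T^4)=\mathcal{N}(T^2)$, every element of $\mathcal{N}(T^4)$ is killed by $T^2$, so the image is $\{0\}$. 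This reduces the first claim to invoking what is already proved, with no new computation.

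\smallskip
\textbf{The density of the sum.} For the second assertion I would pass to the orthogonal/annihilator picture in the dual. The standard strategy is to show that $\big(\mathcal{R}(T^2)+\mathcal{N}(T^2)\big)^{\perp}=\{0\}$ in the dual space, which (since $L^\Phi(\mu)$ is a Banach space and the closure of a subspace is the pre-annihilator of its annihilator) yields that the sum is dense. Now $\big(\mathcal{R}(T^2)+\mathcal{N}(T^2)\big)^{\perp}=\mathcal{R}(T^2)^{\perp}\cap\mathcal{N}(T^2)^{\perp}$. One has the general duality relations $\mathcal{R}(T^2)^{\perp}=\mathcal{N}\big((T^*)^2\big)$ and $\overline{\mathcal{R}\big((T^*)^2\big)}\subseteq\mathcal{N}(T^2)^{\perp}$, with equality $\mathcal{N}(T^2)^{\perp}=\overline{\mathcal{R}\big((T^*)^2\big)}$ holding when $\mathcal{R}(T^2)$ is closed. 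Here is precisely where the hypothesis that $\mathcal{R}(T^2)$ is closed enters: closedness of $\mathcal{R}(T^2)$ guarantees closedness of $\mathcal{R}\big((T^*)^2\big)$ and the validity of these annihilator identities. Thus $\big(\mathcal{R}(T^2)+\mathcal{N}(T^2)\big)^{\perp}=\mathcal{N}\big((T^*)^2\big)\cap\mathcal{R}\big((T^*)^2\big)$.

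\smallskip
\textbf{Closing the argument.} To finish, I would argue that this last intersection is $\{0\}$, which is exactly the ``trivial intersection'' statement applied to the adjoint $T^*$. The key point is that $T^*$ also has finite ascent at most $2$: since $T^n=M_{E(wu)^{n-1}}T$ is again a WCT-type operator by Lemma \ref{l2.3}, the structural argument of Proposition \ref{p2.4} transfers to the adjoint (equivalently, one uses that finite descent of $T$ corresponds to finite ascent of $T^*$ and vice versa for operators with closed range). Applying the first part of the present theorem to $T^*$ in place of $T$ then gives $\mathcal{R}\big((T^*)^2\big)\cap\mathcal{N}\big((T^*)^2\big)=\{0\}$, so the annihilator of the sum is trivial and the sum is dense. \textbf{The main obstacle} I anticipate is justifying the adjoint's finite ascent cleanly and verifying that $\mathcal{R}\big((T^*)^2\big)$ is closed from the closedness of $\mathcal{R}(T^2)$; the closed-range duality is standard Banach-space theory, but one must be careful that $L^\Phi(\mu)$ and its dual behave well enough (reflexivity or at least the validity of the closed-range theorem in this setting) for the annihilator identities to hold without additional hypotheses.
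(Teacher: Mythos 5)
Your proposal follows essentially the same route as the paper's own proof: the trivial intersection via Proposition \ref{p2.12} together with $\alpha(T)\leq 2$, then density via $\bigl(\mathcal{R}(T^2)+\mathcal{N}(T^2)\bigr)^{\perp}=\mathcal{N}\bigl((T^*)^2\bigr)\cap\mathcal{R}\bigl((T^*)^2\bigr)=\{0\}$, using the trivial-intersection statement for $T^*$. In fact you are more careful than the paper, which simply asserts $\alpha(T^*)=\alpha(T)\leq 2$ and writes $\mathcal{N}(T^2)^{\perp}=\mathcal{R}\bigl((T^*)^2\bigr)$ without noting that the first claim needs the adjoint to be a WCT-type operator (or a closed-range duality argument) and the second is exactly where the closedness of $\mathcal{R}(T^2)$ enters via the closed range theorem --- the two points you correctly flag as the obstacles to be justified.
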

\begin{proof}
For bounded WCT operator $T=M_wEM_u$(equivalently for $T^*$) we have obtained that its ascent is finite and specially $\alpha(T)=\alpha(T^*)\leq 2$. Hence by the Proposition \ref{p2.12} we get that $\mathcal{R}(T^2)\cap \mathcal{N}(T^2)=\{0\}$ and $\mathcal{R}(T^{*^2})\cap \mathcal{N}(T^{*^2})=\{0\}$. Since $\mathcal{R}(T^2)^{\perp}=\mathcal{N}(T^{*^2})$, then we get that
$$(\mathcal{R}(T^2)+\mathcal{N}(T^2))^{\perp}=\mathcal{R}(T^2)^{\perp}\cap \mathcal{N}(T^2)^{\perp}=\mathcal{R}(T^{*^2})\cap \mathcal{N}(T^{*^2})=\{0\}.$$
This implies that $\mathcal{R}(T^2)+\mathcal{N}(T^2)$ is dense in $L^{\Phi}(\mu)$.
\end{proof}
In the sequel we obtain that $L^{\Phi}(\mu)$  can be written as direct sum of its two closed sub-spaces.
\begin{pro}
Let $T=M_wEM_u$ be bounded WCT operator on the Orlicz spaces $L^{\Phi}(\Sigma)$ such that $|E(wu)|<1$ on $S(\Phi^{-1}(E(\Phi(w))))\cap S(\Psi^{-1}(E(\Psi(u))))$ and $\mathcal{R}(I-T)$ is closed. Then $L^{\Phi}(\mu)=\mathcal{R}(I-T)\oplus\mathcal{N}(I-T)$.
\end{pro}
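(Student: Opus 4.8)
The plan is to assemble three ingredients: power boundedness of $T$, the finite ascent $\alpha(I-T)\le 1$ already isolated in the preceding proposition, and the closed-range hypothesis, and then to feed them into a uniform ergodic theorem. First I would observe that the standing hypothesis $|E(wu)|<1$ on $S(\Phi^{-1}(E(\Phi(w))))\cap S(\Psi^{-1}(E(\Psi(u))))$ is exactly the one of Proposition \ref{p2.9}(b), so $T$ is power bounded; hence $\sup_n\|T^n\|<\infty$ and in particular $\|T^n\|/n\to 0$. The same hypothesis is what the preceding proposition used to conclude $\alpha(I-T)\le 1$, which I will take as given.

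Next I would dispose of the trivial-intersection half of the decomposition purely algebraically from $\alpha(I-T)\le 1$. If $y\in\mathcal{R}(I-T)\cap\mathcal{N}(I-T)$, write $y=(I-T)x$; then $(I-T)^2x=(I-T)y=0$, so $x\in\mathcal{N}((I-T)^2)=\mathcal{N}(I-T)$, whence $y=(I-T)x=0$. Thus $\mathcal{R}(I-T)\cap\mathcal{N}(I-T)=\{0\}$, and it remains only to prove $\mathcal{R}(I-T)+\mathcal{N}(I-T)=L^{\Phi}(\mu)$.

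For this surjectivity half I would use the Cesaro means. By \eqref{e2.3} we have, for every $f$, that $(I-A_n(T))f=(I-T)B_nf\in\mathcal{R}(I-T)$, where $B_n$ denotes the operator on the right of \eqref{e2.3}; and by \eqref{e2.2} we have $(I-T)A_n(T)f=(I-T^n)f/n\to 0$ since $\|T^n\|/n\to 0$. Because $\|T^n\|/n\to 0$ and $\mathcal{R}(I-T)$ is closed, the uniform ergodic theorem (see \cite{kr}) guarantees that $A_n(T)$ converges in operator norm to an idempotent $P$ with $\mathcal{R}(P)=\mathcal{N}(I-T)$ and $\mathcal{N}(P)=\mathcal{R}(I-T)$. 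Then $Pf\in\mathcal{N}(I-T)$ (as $(I-T)P=\lim_n(I-T)A_n(T)=0$), while $f-Pf=\lim_n(I-A_n(T))f$ lies in $\mathcal{R}(I-T)$ because the latter is closed and every $(I-A_n(T))f$ belongs to it. Hence $f=Pf+(f-Pf)\in\mathcal{N}(I-T)+\mathcal{R}(I-T)$, and combining with the previous step gives $L^{\Phi}(\mu)=\mathcal{R}(I-T)\oplus\mathcal{N}(I-T)$.

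The hard part will be the surjectivity half, that is, the norm convergence of $A_n(T)$ and the identification $\mathcal{N}(P)=\mathcal{R}(I-T)$. This is precisely where the closed-range hypothesis is indispensable: without it the ergodic decomposition only produces $\overline{\mathcal{R}(I-T)}$, and the sum need not be all of $L^{\Phi}(\mu)$. The role of Proposition \ref{p2.9} is to supply the verification of the single analytic hypothesis $\|T^n\|/n\to 0$ needed to run the uniform ergodic theorem, after which everything else is formal.
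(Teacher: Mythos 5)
Your proof is correct and takes essentially the same route as the paper: both deduce power boundedness of $T$ from Proposition \ref{p2.9}(b), note that consequently $\|T^n\|/n\to 0$, and then invoke an ergodic theorem that converts the closed-range hypothesis on $I-T$ into the direct-sum decomposition (the paper cites Theorem 4.4 of \cite{gz}, while you cite Lin's uniform ergodic theorem via \cite{kr}, which is the same principle). Your separate algebraic derivation of $\mathcal{R}(I-T)\cap\mathcal{N}(I-T)=\{0\}$ from $\alpha(I-T)\le 1$ is a harmless redundancy, since the ergodic theorem already delivers the full decomposition.
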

\begin{proof}
Since $|E(wu)|<1$ on $S(\Phi^{-1}(E(\Phi(w))))\cap S(\Psi^{-1}(E(\Psi(u))))$, then by Proposition \ref{p2.9} we get that $T$ is power bounded. So $\{\frac{T^n}{n}\}$ is convergent in weak operator topology on $L^{\Phi}(\mu)$. So by Theorem 4.4 of \cite{gz} we have the result.
\end{proof}
Finally we have the following result.
\begin{thm}
Let $|E(wu)|<1$ on $S(\Phi^{-1}(E(\Phi(w))))\cap S(\Psi^{-1}(E(\Psi(u))))$ and $T=M_wEM_u$ . Then $\overline{\mathcal{R}(I-T)}=L^p(\mathcal{F})$ and equivalently we have the followings:\\
(i) $\mathcal{R}(I-T)=L^{\Phi}(\mu)$;\\
(ii) $I-T$ is invertible;\\
(iii) $\{\|(I-T)^{-1}(\frac{(n-1)I-M_{v_n}T}{n})\|\}_{n\in \mathbb{N}}=\{\|n^{-1}(M_{w_n}T+(n-1)I)\|\}_{n\in \mathbb{N}}$ is bounded;\\
(iv) $\{(I-T)^{-1}(\frac{(n-1)I-M_{v_n}T}{n})(f)\}_{n\in \mathbb{N}}=\{n^{-1}(M_{w_n}Tf+(n-1)f)\}_{n\in \mathbb{N}}$  converges for all $f\in L^{\Phi}(\mu)$. In which $w_n=\sum^{n-2}_{i=1}(n-i-1)E(uw)^{i-1}$.\\
In this case, $n^{-1}(M_{w_n}Tf+(n-1)f)\rightarrow (I-T)^{-1}(f)$ for all $f\in L^{\Phi}(\mu)$.\\
(v) $A_n(T)(f)$ converges to a $T$-invariant limit for all $f\in L^{\Phi}(\mu)$.
\end{thm}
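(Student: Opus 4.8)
The plan is to read this statement as a mean ergodic theorem for the power-bounded operator $T$, specialized to the WCT setting so that the five conditions become explicit in terms of the symbol $E(wu)$. The first and decisive step is to invoke Proposition \ref{p2.9}(b): since $|E(wu)|<1$ on $S(\Phi^{-1}(E(\Phi(w))))\cap S(\Psi^{-1}(E(\Psi(u))))$, the operator $T$ is power bounded on $L^{\Phi}(\mu)$. Power boundedness is exactly the hypothesis needed to feed into the ergodic machinery of \cite{gz} already used in the preceding propositions. With power boundedness in hand, I would appeal to the general ergodic theorem of \cite{gz} (the same circle of results as Theorem 4.4 invoked above) which, for a power-bounded $T$ with $\overline{\mathcal{R}(I-T)}=L^{\Phi}(\mu)$, asserts the equivalence of fullness of $\mathcal{R}(I-T)$, invertibility of $I-T$, norm-boundedness and strong convergence of the averaged resolvent sequence, and convergence of the Cesaro means to a $T$-invariant limit. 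This yields the equivalence of (i)--(v) at the abstract level; what remains is to render the operator-theoretic quantities explicit through the WCT structure.

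The explicit computation rests on Lemma \ref{l2.3}, which gives $T^{n}f=E(wu)^{n-1}wE(uf)$, and on the Cesaro formula $A_n(T)=n^{-1}(I+M_{v_n}T)$ with $v_n=\sum_{i=0}^{n-2}E(uw)^i$ recorded before the statement. Substituting these into formula \eqref{e2.3} and rewriting $I-A_n(T)=\tfrac{(n-1)I-M_{v_n}T}{n}$, I would collect the polynomial coefficients in $E(uw)$ to check that $(I-T)^{-1}\big(\tfrac{(n-1)I-M_{v_n}T}{n}\big)=n^{-1}(M_{w_n}T+(n-1)I)$ with $w_n=\sum_{i=1}^{n-2}(n-i-1)E(uw)^{i-1}$, precisely the identity appearing in (iii) and (iv). This follows from $T^{k}f=E(wu)^{k-1}Tf$ for $k\geq1$ and the observation that the coefficient of $T^{k}$ in \eqref{e2.3} is $(n-1-k)$.

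Because $|E(wu)|<1$ on the relevant support, the geometric series $\sum_{i\geq0}E(uw)^i$ converges there to $(1-E(uw))^{-1}\chi_{S(E(wu))}$, so that $\tfrac1n w_n\to(1-E(uw))^{-1}$ while $\tfrac{n-1}{n}\to1$; hence $n^{-1}(M_{w_n}Tf+(n-1)f)\to f+(1-E(wu))^{-1}Tf$. A direct check, using $T^{2}f=E(wu)Tf$, confirms that $(I-T)\big(f+(1-E(wu))^{-1}Tf\big)=f$, so the limit is exactly $(I-T)^{-1}f$. This simultaneously verifies the convergence in (iv) and, via \eqref{e2.3} together with power boundedness, the convergence of $A_n(T)f$ to a $T$-invariant limit in (v).

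The main obstacle I anticipate is not the soft ergodic part but the bookkeeping that matches the abstract resolvent expression with the concrete WCT expression: one must carefully track how the coefficients $v_n$ in the Cesaro means transform into $w_n$ after applying $(I-T)^{-1}$, and confirm that the pointwise geometric limit is uniform enough, using power boundedness and the strict bound $|E(wu)|<1$ on the support, to pass to the limit in the $N_{\Phi}$-norm of $L^{\Phi}(\mu)$. Establishing that the formally summed candidate $f+(1-E(wu))^{-1}Tf$ genuinely lies in $L^{\Phi}(\mu)$ and equals $(I-T)^{-1}f$, rather than being a mere pointwise expression, is where the GCH-inequality and the boundedness hypothesis $w\Psi^{-1}(E(\Psi(u)))\in L^{\infty}(\Sigma)$ must be brought to bear.
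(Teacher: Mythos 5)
Your first two paragraphs reproduce the paper's own route: power boundedness from Proposition \ref{p2.9}(b), the explicit computation (via Lemma \ref{l2.3}) of the Grabiner--Zemanek operator $B_n(T)=n^{-1}\big(T^{n-2}+2T^{n-3}+\cdots+(n-2)T+(n-1)I\big)$ in the WCT form $n^{-1}(M_{w_n}T+(n-1)I)$ with $w_n=\sum_{i=1}^{n-2}(n-i-1)E(uw)^{i-1}$, and the appeal to Proposition 4.5 of \cite{gz} to get the equivalence of (i)--(iv); the paper then treats (v) by a separate weak-compactness argument. Had you stopped there, your proposal would essentially coincide with the paper's proof.

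Your third paragraph, however, contains a genuine error, not the ``bookkeeping obstacle'' you describe. There you claim that $n^{-1}(M_{w_n}Tf+(n-1)f)$ converges in $L^{\Phi}(\mu)$ to $f+(1-E(wu))^{-1}Tf=(I-T)^{-1}f$, i.e.\ that (iv) (and hence, by the equivalence, all of (i)--(v), in particular invertibility of $I-T$) actually \emph{holds} under the stated hypothesis. This is false: the hypothesis $|E(wu)|<1$ on $S(\Phi^{-1}(E(\Phi(w))))\cap S(\Psi^{-1}(E(\Psi(u))))$ is only a pointwise inequality and does not keep $1-E(wu)$ bounded away from zero. Take $X=\mathbb{N}$ with counting measure, $\mathcal{A}=\Sigma$ (so $E=I$ and $T=M_{uw}$), and $uw=1-1/k$ at the point $k$: then $|E(wu)|<1$ everywhere and $T$ is power bounded, but $I-T$ is multiplication by $1/k$, which is injective with dense range yet not invertible, so (i) and (ii) fail. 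In that situation your candidate limit $f+(1-E(wu))^{-1}Tf$ simply does not lie in $L^{\Phi}(\mu)$ for general $f$; the convergence of $w_n/n$ to the geometric series $(1-E(wu))^{-1}$ is only pointwise a.e., and no use of the GCH-inequality or of $w\Psi^{-1}(E(\Psi(u)))\in L^{\infty}(\Sigma)$ can upgrade it to $N_{\Phi}$-norm convergence, because the conclusion itself is false. The theorem asserts only the \emph{equivalence} of (i)--(v), which can all fail together; a proof must not try to establish them outright. A secondary gap: you invoke the result of \cite{gz} under the hypothesis $\overline{\mathcal{R}(I-T)}=L^{\Phi}(\mu)$, but that density is itself part of the conclusion of the theorem (the paper obtains it by arguing that $A_n(T)f\to 0$ for every $f$), and your proposal assumes it without proof.
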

\begin{proof} By our assumptions we get that $T$ is power bounded and so for every $f\in L^{\Phi}(\mu)$ we have $A_n(T)(f)\rightarrow 0$. As is defined in \cite{gz} $$B_n(T)=n^{-1}(T^{n-2}+2T^{n-3}+....+(n-2)T+(n-1)I).$$
 Hence for $T=M_wEM_u$ we have:
$$B_n(T)=n^{-1}(M_{w_n}T+(n-1)I),$$
 in which $w_n=\sum^{n-2}_{i=1}(n-i-1)E(uw)^{i-1}$.
 Therefore by Proposition 4.5 of \cite{gz} we have the proof of (i)-(iv). Since $\{\|E(uw)^n\|_{\infty}\}_{n\in \mathbb{N}}$ is uniformly bounded, then $T=M_wEM_u$ is power bounded. Also the closure of a norm bounded subset of $L^{\Phi}(\mu)$ is weakly compact. Hence it's weakly sequentially compact. Then we get (v).
\end{proof}

\end{document}